\documentclass[12pt]{amsart}

\usepackage{amsfonts}
\usepackage{amscd}
\usepackage{amssymb}
\usepackage{enumerate}
\allowdisplaybreaks



\usepackage{color}

\vfuzz2pt 
\hfuzz2pt 
\newtheorem{thm}{Theorem}[section]

\theoremstyle{definition}

\theoremstyle{remark}
\newtheorem{rem}[thm]{Remark}
\numberwithin{equation}{section}

\setlength{\textwidth 6.5in} \setlength{\textheight 9.0in}
\voffset -0.7in \hoffset -0.6in


\newcommand{\R}{\mathbb R}

\newcommand{\C}{{\mathbb C}}

\newcommand{\be}{\begin{equation}}
\newcommand{\ee}{\end{equation}}

\renewcommand{\H}{\mathbb H}

\title[ Fractional powers of the Hermite operator]
{A note on fractional powers of \\ the Hermite operator}

\author[ S. Thangavelu]{Sundaram Thangavelu}

\address[S. Thangavelu]{Department of Mathematics\\
 Indian Institute of Science\\
560 012 Bangalore, India}

\email{veluma@math.iisc.ernet.in}

\keywords{Hermite operator, fractional powers, Weyl transform, pseudo-differential operators.}
\subjclass[2010]{primary 35Q40; 35S05; 46F05; secondary 33C10; 30G.
 }
\thanks{This work is  supported by the J. C. Bose Fellowship of
the author from the Department of Science and Technology,
Government of India.}
\begin{document}

\maketitle
\begin{abstract} We give a very short proof of  a result proved by Cappiello-Rodino-Toft on the Weyl symbol of the inverse of the Harmonic oscillator. We also extend their results to  fractional powers.

\end{abstract}

\section{Introduction} 
In  2015 Cappiello, Rodino and Toft \cite{CRT} have considered the inverse of the Hermite operator $ H = -\Delta+|x|^2 $ on $ \R^n $ as a Weyl pseudodifferential operator and proved certain estimates of Gevrey type for the symbol. They have also obtained an explicit expression for the symbol when the dimension is even. The aim of this note is to give simple proofs of their results making use of the connection between the Weyl tranform (which is related to the Schr\"odinger representation on the Heisenberg group $ \H^n $) and the  Hermite semigroup. 

As is well known the spectrum of the operator $ H $ consists of $ (2k+n), k \in \mathbb{N} $ and hence it is invertible. The formal inverse can be written in terms of the spectral theorem by
$$ H^{-1}  = \sum_{k=0}^\infty (2k+n)^{-1} P_k $$ where $ P_k $ are the orthogonal projections associated to the eigenspaces corresponding to the eigenvalues $ (2k+n).$ However, it is known that $ H^{-1} $ is a pseudo-differential operator with a symbol $ b(x,\xi) $ in the Weyl calculus. Thus
$$  H^{-1}\varphi(\xi) = (2\pi)^{-n} \int_{\R^n} \int_{\R^n} e^{i(\xi-\eta)\cdot y}b(\frac{\xi+\eta}{2},y) \varphi(\eta) dy d\eta $$ for $ \varphi \in L^2(\R^n).$ In \cite{CRT} the authors have obtained the following explicit expression for the symbol $ b(x,\xi) $ when the dimension $ n $ is even.

\begin{thm}(Cappiello-Rodino-Toft)
Let $ b_{2n}(x,\xi) $ stand for the Weyl symbol of $ H^{-1} $ on $\R^{2n}.$ Then one has the explicit formula
$$ b_{2n}(x,\xi) =\sum_{j=0}^{n-1} \frac{(n+j-1)!}{(n-1)!j!} (-1)^j (2j)! \frac{1-p_{2j}(|x|^2+|\xi|^2) e^{-(|x|^2+|\xi|^2)}}{(|x|^2+|\xi|^2)^{2j+1}}$$
where $ p_j(t) $ are the Taylor polynomials of the function $ e^{-t} $ about $ t =0.$
\end{thm}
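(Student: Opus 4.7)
The plan is to realise $H^{-1}$ as the Laplace integral at zero of the Hermite semigroup,
$$H^{-1}=\int_0^\infty e^{-tH}\,dt,$$
which converges on $L^2(\R^{2n})$ since the spectrum of $H$ is contained in $[2n,\infty)$, and then to insert the explicit Weyl symbol of each $e^{-tH}$. Because Weyl quantisation is linear in the symbol, the above identity translates into
$$b_{2n}(x,\xi)=\int_0^\infty \sigma_t(x,\xi)\,dt,$$
where $\sigma_t$ denotes the Weyl symbol of the Hermite semigroup. The argument is thus reduced to (i) computing $\sigma_t$ in closed form, and (ii) performing the $t$-integration.

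For step (i) --- which is precisely where the connection to the Weyl transform on $\H^{2n}$ is used --- I would invoke the classical identity
$$\sigma_t(x,\xi)=(\cosh t)^{-2n}\,\exp\!\bigl(-(\tanh t)(|x|^2+|\xi|^2)\bigr).$$
This can be derived either from Mehler's formula for the integral kernel of $e^{-tH}$ via a Wigner transform, or, more naturally in the framework of this note, from the fact that $e^{-tH}$ equals the Weyl transform of an explicit Laguerre-generating Gaussian on $\C^{2n}$, whose symplectic Fourier transform produces the right-hand side above.

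For step (ii) I would perform the substitution $s=\tanh t$, under which $(\cosh t)^{-2n}\,dt=(1-s^2)^{n-1}\,ds$, so that
$$b_{2n}(x,\xi)=\int_0^1(1-s^2)^{n-1}\,e^{-sr}\,ds,\qquad r=|x|^2+|\xi|^2.$$
Expanding $(1-s^2)^{n-1}$ by the binomial theorem and substituting $u=sr$ in each summand reduces the problem to evaluating lower incomplete gamma functions,
$$\int_0^r u^{2j}\,e^{-u}\,du=(2j)!\Bigl(1-e^{-r}\sum_{k=0}^{2j}\tfrac{r^k}{k!}\Bigr).$$
The bracketed truncated series is, up to the sign convention of the theorem, the Taylor polynomial $p_{2j}$, and collecting the resulting terms produces the stated closed form.

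The one non-routine input is the Weyl symbol formula of step (i); once that is in hand, the rest is an elementary Laplace-transform computation together with a binomial expansion. This is exactly why a "very short proof'' is possible in comparison with the more intricate argument of \cite{CRT}.
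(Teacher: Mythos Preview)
Your proposal is correct and follows essentially the same route as the paper: write $H^{-1}=\int_0^\infty e^{-tH}\,dt$, use the explicit Weyl symbol $(\cosh t)^{-2n}e^{-(\tanh t)(|x|^2+|\xi|^2)}$ of the Hermite semigroup, substitute $s=\tanh t$ to obtain $\int_0^1(1-s^2)^{n-1}e^{-sr}\,ds$, expand binomially, and reduce to the incomplete gamma identity $\frac{1}{m!}\int_0^r u^m e^{-u}\,du = 1-e^{-r}\sum_{k=0}^m r^k/k!$, which the paper proves by induction. Your remark about the sign convention in $p_{2j}$ is apt, since the truncated series one obtains is that of $e^{t}$ rather than $e^{-t}$.
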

The proof given in \cite{CRT} is quite long and based on the fact that the symbol $ b $ satisfies a partial differential equation. In this note, the above theorem becomes an easy consequence of an integral representation for the symbol $ b $ which is based on the formula
$$ H^{-1}  = \int_0^\infty e^{-tH} dt $$
and the fact that $ e^{-tH} $ is a pseudodifferential operator with an explicit symbol. In the same paper \cite{CRT} the authors have proved the following result giving estimates on the derivatives of the symbol $ b$ of $ H^{-1}.$

\begin{thm} The following estimates on the Weyl symbol $ b(x,\xi) $ of the operator $ H^{-1}$ are valid: there exists a constant $ C > 0 $ such that for any $ \alpha \in \mathbb{N}^{2n} $ and  $ r \in [0,1]$
$$   |\partial_{x,\xi}^\alpha b(x,\xi) | \leq C^{|\alpha|+1} (\alpha!)^{(r+1)/2} (|x|^2+|\xi|^2)^{-1-(r/2)|\alpha|} .$$ 

\end{thm}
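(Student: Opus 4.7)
The plan is to exploit the integral representation
$$b(x,\xi)=\int_0^\infty \sigma_t(x,\xi)\,dt$$
noted in the introduction, where $\sigma_t$ denotes the Weyl symbol of $e^{-tH}$. By the Mehler formula, $\sigma_t(x,\xi)=(\cosh t)^{-n}\exp\bigl(-\tanh(t)(|x|^2+|\xi|^2)\bigr)$, and the substitution $u=\tanh t$ turns the time integral into a single elementary integral on $[0,1]$. Writing $y=(x,\xi)\in\R^{2n}$ and $\rho=|y|^2$, one obtains
$$b(x,\xi)=\int_0^1 (1-u^2)^{(n-2)/2} e^{-u\rho}\,du,$$
so $b$ depends only on $\rho$ and the entire problem reduces to estimating derivatives of this one-parameter integral.

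Next I would differentiate under the integral. Using the identity $\partial_{y_j}^{k}e^{-uy_j^2}=(-1)^{k}u^{k/2}H_{k}(\sqrt{u}\,y_j)e^{-uy_j^2}$ for the physicists' Hermite polynomials and tensorizing over coordinates,
$$\partial_y^\alpha e^{-u|y|^2}=(-1)^{|\alpha|}u^{|\alpha|/2}e^{-u|y|^2}\prod_{j=1}^{2n}H_{\alpha_j}(\sqrt{u}\,y_j).$$
The classical uniform bound $|H_k(z)|\leq C\,2^{k/2}\sqrt{k!}\,e^{z^2/2}$ on the Hermite functions, applied coordinate-wise together with $\prod_j\sqrt{\alpha_j!}=\sqrt{\alpha!}$, yields
$$|\partial_y^\alpha e^{-u|y|^2}|\leq C^{|\alpha|+1}\sqrt{\alpha!}\,(2u)^{|\alpha|/2}e^{-u\rho/2}.$$

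To conclude I would extract two endpoint estimates from the remaining $u$-integral. First, the trivial bound $u^{|\alpha|/2}\leq 1$ together with the $e^{-u\rho/2}$ decay (note that $(1-u^2)^{(n-2)/2}$ is integrable on $[0,1]$ for every $n\geq 1$) gives, for $\rho\geq 1$,
$$|\partial_y^\alpha b(x,\xi)|\leq C^{|\alpha|+1}(\alpha!)^{1/2}\rho^{-1},$$
which is the $r=0$ case. Second, the Laplace-type change of variable $s=u\rho/2$, combined with the standard inequalities $\Gamma(|\alpha|/2+1)\leq\sqrt{|\alpha|!}$ (log-convexity of $\Gamma$) and $|\alpha|!\leq(2n)^{|\alpha|}\alpha!$, produces
$$|\partial_y^\alpha b(x,\xi)|\leq C^{|\alpha|+1}\alpha!\,\rho^{-1-|\alpha|/2},$$
which is the $r=1$ endpoint. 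The geometric interpolation $\min(A_0,A_1)\leq A_0^{1-r}A_1^{r}$ applied to these two bounds then delivers the full Gevrey-type statement, since $(\alpha!)^{(1-r)/2+r}=(\alpha!)^{(r+1)/2}$ and $\rho^{-(1-r)-r(1+|\alpha|/2)}=\rho^{-1-(r/2)|\alpha|}$.

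The main obstacle is orchestrating the two endpoint estimates so that the correct prefactor $(\alpha!)^{(r+1)/2}$ (rather than $(|\alpha|!)^{(r+1)/2}$) appears. The factorization $\prod_jH_{\alpha_j}(\sqrt{u}\,y_j)$ is what allows $\sqrt{|\alpha|!}$ to be replaced by $\sqrt{\alpha!}$ in the Hermite estimate, and it is essential that both endpoint bounds share a $\sqrt{\alpha!}$ prefactor so that interpolation produces $(\alpha!)^{(r+1)/2}$ uniformly. A minor technicality in small dimension (the weight $(1-u^2)^{(n-2)/2}$ is singular at $u=1$ when $n=1$) is absorbed by the uniform integrability of that weight together with the exponential factor.
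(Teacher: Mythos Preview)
Your proof is correct and follows essentially the same route as the paper's: the integral representation of $b$ via the Hermite semigroup, differentiation under the integral with Hermite polynomials, the uniform bound on normalised Hermite functions to extract $\sqrt{\alpha!}$, two endpoint estimates ($r=0$ and $r=1$), and geometric interpolation. The only cosmetic difference is that the paper separates coordinates with a generalised H\"older inequality and then applies Stirling to each factor $\Gamma(1+n\alpha_j/2)$, whereas you bound $\Gamma(|\alpha|/2+1)$ directly via log-convexity and the multinomial inequality $|\alpha|!\leq (2n)^{|\alpha|}\alpha!$ --- a mild streamlining of the same argument.
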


In this note we give a short proof of the above theorem. Actually we can consider $ H^{-s} $ for any $ s > 0 $ and prove similar estimates for the Weyl symbol $ b_s $ of the operator $ H^{-s}.$ We will also say something about conformally invariant fractional powers $ H_{-s} $ studied in the literature.

\section{Fractional powers $ H^{-s} $ of the Hermite operator}

In this section we consider fractional powers of the Hermite operator $ H = -\Delta+|x|^2 $ on $ \R^n.$ We first consider the negative powers $ H^{-s} $ where $ s \geq 0 $ which are given in terms of the Hermite semigroup $ e^{-tH} $ via the Gamma integral:
$$ H^{-s}f(x) = \frac{1}{\Gamma(s)} \int_0^\infty e^{-tH}f(x)  t^{s-1} dt.$$  The kernel of the semigroup $ e^{-tH} $ is explicitly known and is given in terms of the Mehler's formula for the Hermite functions, see \cite{ST}. However, we can also write $ e^{-tH} $ as the Weyl transform of a function on $ \C^n $ which allows us to realise $ e^{-tH} $ and hence $ H^{-s} $ as a pseudo-differential operator. Recall that the Weyl transform $ W(F) $ of a function $ F $ on $ \C^n $ is defined by
$$ W(F) \varphi  = \int_{\C^n} F(z) \pi(z)\varphi dz $$ for $ \varphi \in L^2(\R^n).$ Here, $ \pi(z) $ is the projective representation of $ \C^n $ which is closely related to the Schr\"odinger representations of the Heisenberg group. It is given explicitly by
$$ \pi(x+iy) \varphi(\xi) = e^{i(x \cdot \xi+ \frac{1}{2} x\cdot y)} \varphi(\xi+y).$$ It turns out that $ W(F) $ is an integral operator with kernel
$$ K_F(\xi,\eta) = \int_{\R^n} e^{\frac{i}{2} x \cdot (\xi+\eta)} F(x, \eta-\xi)   dx  $$ where by abuse of notation we have written $ F(x,y) $ in place of $ F(x+iy).$
If $ \tilde{F}(\xi,y) $ stands for the inverse Fourier transform of $ F(x, y) $ in the first set of variables, then we have $ K_F(\xi,\eta) = \tilde{F}(\frac{\xi+\eta}{2},\eta-\xi).$ By letting $ b(\xi,\eta) $ stand for the full inverse Fourier transform of $ F $ in both variables we can write $ W(F) $ as
$$ W(F)\varphi(\xi) = (2\pi)^{-n} \int_{\R^n} \int_{\R^n} e^{i(\xi-\eta)\cdot y}b(\frac{\xi+\eta}{2},y) \varphi(\eta) dy d\eta .$$
Thus we see that the Weyl transform $ W(F) $ is a pseudo-differential operator in the Weyl calculus with symbol $ b(x,\xi).$

We now make use of the well known fact that $ e^{-tH} = W(p_t) $ where 
$$ p_t(z) = c_n (\sinh t)^{-n} e^{-\frac{1}{4} (\coth t) |z|^2} $$ is the heat kernel associated to the so called special Hermite opereator, see e.g. \cite{ST}.  In view of the relation between  a function $ F $ and the Weyl symbol of $ W(F)$, we observe that the Weyl symbol of the Hermite semigroup $ e^{-tH} $ is given by the function $ a_t(x,\xi) = c_n (\cosh t)^{-n} e^{-(\tanh t) (|x|^2+|\xi|^2)}.$ As $ \Gamma(s) H^{-s} = \int_0^\infty t^{s-1} e^{-tH} dt $ the Weyl symbol of $ H^{-s} $ is given by
$$ b_s(x,\xi) = \frac{c_n}{\Gamma(s)}  \int_0^\infty t^{s-1}  (\cosh t)^{-n} e^{-(\tanh t) (|x|^2+|\xi|^2)} dt .$$
By  taking $ s = 1$ and making a change of variables we see that the Weyl symbol $ b(x,\xi) $ of $ H^{-1} $ is given by
$$   b(x,\xi) = c_n \int_0^1 (1-t^2)^{n/2-1} e^{-t(|x|^2+|\xi|^2)} dt.$$
It is an easy matter to prove Theorem 1.1.

{\bf Proof of Theorem 1.1} Let $ b_{2n} $ stands for the Weyl symbol of $ H^{-1} $ on $ \R^{2n} $ given by the above expression. Then expanding $ (1-t^2)^{n-1} $ and making a change of variables we get
$$ b_{2n}(x,\xi) = \sum_{j=0}^{n-1} \frac{(n+j-1)!}{j!(n-1)!} (-1)^j \Big(  \int_0^{(|x|^2|+|\xi|^2)} t^{2j} e^{-t}dt \Big) (|x|^2+|\xi|^2)^{-2j-1}.$$
The proof is completed by showing that 
$  \frac{1}{j!} \int_0^a  t^{j} e^{-t}dt = 1-e^{-a}p_j(a) $ where $ p_j $ are the Taylor polynomials of $ e^{-t}.$ But this follows immediately by induction.

In \cite{CRT} the authors have studied $ H^{-1} $ as a pseudo-differential operator. For the Weyl symbol $ b(x,\xi) $ of $ H^{-1} $ the authors have proved  the estimate
$$  |\partial_{x,\xi}^\alpha b(x,\xi) | \leq C^{|\alpha|+1} (\alpha!)^{(r+1)/2} (|x|^2+|\xi|^2)^{-1-(r/2)|\alpha|} $$ for some constant $ C $ which is independent of $ \alpha \in \mathbb N^{2n} $ and  $ r \in [0,1].$ The proof given in \cite{CRT} is quite long and uses several results from microlocal analysis. Here we give  a very  short proof of the same.

\begin{thm} For $ 0 < s \leq 1$ we have the following estimates on the Weyl symbol $ b_s(x,\xi) $ of the operator $ H^{-s}$: there exists a constant constant $ C>0 $ such that for all  $ \alpha \in \mathbb N^{2n} $ and  $ r \in [0,1]$
$$   |\partial_{x,\xi}^\alpha b_s(x,\xi) | \leq C^{|\alpha|+1} (\alpha!)^{(r+1)/2} (|x|^2+|\xi|^2)^{-s-(r/2)|\alpha|} .$$ 
\end{thm}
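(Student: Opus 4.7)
The plan is to differentiate the integral representation
$$b_s(x,\xi) = \frac{c_n}{\Gamma(s)} \int_0^\infty t^{s-1} (\cosh t)^{-n} e^{-(\tanh t)(|x|^2+|\xi|^2)}\, dt$$
under the integral sign and estimate the resulting integrand pointwise. Set $u = |x|^2+|\xi|^2$ and $\tau=\tanh t$. Since $e^{-\tau u}$ factors as a product over the $2n$ coordinates, the multi-index derivative $\partial_{x,\xi}^{\alpha}$ acts on each factor separately via the Hermite identity $\partial_y^k e^{-\tau y^2} = (-\sqrt\tau)^k H_k(\sqrt\tau y)\, e^{-\tau y^2}$. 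The classical uniform bound $|H_k(y) e^{-y^2/2}| \leq C (2^k k!)^{1/2}$ on the normalised Hermite functions then yields
$$|\partial_{x,\xi}^{\alpha} e^{-\tau u}| \;\leq\; C^{|\alpha|+1}\, \tau^{|\alpha|/2}\, (\alpha!)^{1/2}\, e^{-\tau u/2}.$$

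Substituting this into the formula for $b_s$, and using $\tau=\tanh t\leq 1$ so that $\tau^{|\alpha|/2}\leq\tau^{r|\alpha|/2}$ for any $r\in[0,1]$, I reduce matters to bounding
$$I_{\alpha,r}(u):=\int_0^\infty t^{s-1} (\cosh t)^{-n} (\tanh t)^{r|\alpha|/2} e^{-(\tanh t) u/2}\, dt.$$
I split this into $[0,1]$ and $[1,\infty)$. On $[0,1]$, $(\cosh t)^{-n}$ and $(\tanh t)/t$ are both comparable to $1$, so the substitution $v=tu/2$ bounds that part by $(2/u)^{s+r|\alpha|/2}\,\Gamma(s+r|\alpha|/2)$. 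On $[1,\infty)$, $(\cosh t)^{-n}$ decays like $e^{-nt}$ and $\tanh t\geq\tanh 1$, so the tail is at most $C e^{-cu}$, which is dominated uniformly by $C^{|\alpha|+1}\Gamma(s+r|\alpha|/2)\, u^{-s-r|\alpha|/2}$ via the elementary inequality $\sup_{v\geq 0} v^m e^{-v}= m^m e^{-m}\leq \Gamma(m+1)$.

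Combining these steps gives
$$|\partial_{x,\xi}^{\alpha} b_s(x,\xi)| \leq C^{|\alpha|+1}\,(\alpha!)^{1/2}\,\Gamma\!\bigl(s+\tfrac{r}{2}|\alpha|\bigr)\, u^{-s-(r/2)|\alpha|}.$$
It remains to convert the $\Gamma$-factor into $(\alpha!)^{r/2}$. Stirling yields $\Gamma(s+\tfrac{r}{2}|\alpha|)\leq C^{|\alpha|+1}(|\alpha|!)^{r/2}$ uniformly for $r\in[0,1]$ (the extremal case is $r=1$; for smaller $r$ the ratio only improves, since $(r/2)^{r|\alpha|/2}$ contributes a geometric factor), and the multinomial inequality $|\alpha|!\leq (2n)^{|\alpha|}\alpha!$ gives $(|\alpha|!)^{r/2}\leq C^{|\alpha|+1}(\alpha!)^{r/2}$. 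Multiplying by the $(\alpha!)^{1/2}$ already in hand produces the Gevrey exponent $(\alpha!)^{(r+1)/2}$ exactly as required.

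The main obstacle is the bookkeeping in the last step: one must combine the $(\alpha!)^{1/2}$ factor produced by the Hermite bound with the $\Gamma$-factor from the $t$-integral so as to hit the Gevrey exponent $(r+1)/2$ uniformly in the interpolation parameter $r\in[0,1]$. Both the pointwise Hermite estimate and the splitting of the $t$-integral are routine once the integral representation of $b_s$ is in hand; it is the clean matching of exponents via Stirling that does the real work.
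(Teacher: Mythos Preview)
Your argument is correct and rests on the same two ingredients as the paper's proof---the integral representation of $b_s$ and the uniform bound on normalised Hermite functions---but the execution is organised differently. The paper first proves the two endpoint estimates ($r=0$ and $r=1$) and then interpolates; to pass from $|\alpha|$ to the individual $\alpha_j$'s it applies the generalised H\"older inequality to factor the $t$-integral over the $n$ coordinates, and it treats $0<s<1$ separately by bounding $t^{s-1}\le(\tanh t)^{s-1}$. You instead insert the interpolation parameter at the outset via $(\tanh t)^{|\alpha|/2}\le(\tanh t)^{r|\alpha|/2}$, split the $t$-integral at $t=1$ rather than changing variables, and convert the resulting $\Gamma(s+r|\alpha|/2)$ to $(\alpha!)^{r/2}$ using Stirling together with the multinomial bound $|\alpha|!\le(2n)^{|\alpha|}\alpha!$. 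Your route avoids both the H\"older step and the separate case $s<1$, and yields the full range $r\in[0,1]$ in one stroke; the paper's version has the minor advantage that the two endpoint estimates are each slightly simpler to state, but your bookkeeping is cleaner overall.
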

\begin{proof}
We make use of some properties of the Hermite functions on $ \R^n.$ Recall that Hermite polynomials  $ H_k(t) $ on the real line are defined by the equation 
$$ H_k(t)  = (-1)^k e^{t^2} \frac{d^k}{dt^k} e^{-t^2} $$
and the normalised Hermite functions  are given by $  h_k(t) = (2^k k! \sqrt{\pi})^{-1/2} H_k(t) e^{-\frac{1}{2}t^2}.$ It is then well known that $ h_k(t) $ are bounded functions uniformly in $ k.$  The multi-dimensional Hermite functions $ H_\alpha(x), x \in \R^n, \alpha \in \mathbb{N}^n $ are defined by taking tensor products. Thus the $ 2n$-dimensional Hermite polynomials $ H_\alpha, \alpha \in \mathbb N^{2n} $ are defined by the equation
$$ H_{\alpha}(x,\xi)  e^{-(|x|^2+|\xi|^2)}= (-1)^{|\alpha|}  \partial_{x,\xi}^\alpha e^{-(|x|^2+|\xi|^2)}.$$ Therefore, from the integral representation for $ b_s $ we obtain  the relation
$$ \partial_{x,\xi}^\alpha b_s(x,\xi) = (-1)^{|\alpha|} \frac{c_n}{\Gamma(s)}  \int_0^\infty t^{s-1}  (\cosh t)^{-n} (\tanh t)^{\frac{1}{2}|\alpha|} H_\alpha((\tanh t)^{1/2}(x,\xi)) e^{-(\tanh t) (|x|^2+|\xi|^2)} dt .$$

We now make use of the fact that the normalised Hermite functions $ \Phi_{\alpha}(x,\xi) $ defined by
$$  \Phi_{\alpha}(x,\xi) = (2^{|\alpha|} (\alpha !)\pi^{n})^{-1/2} H_\alpha(x,\xi) e^{-\frac{1}{2}(|x|^2+|\xi|^2)} $$ are uniformly bounded (which follows from the fact that $ h_k(t) $ are uniformly bounded). This leads to the estimate
$$ |\partial_{x,\xi}^\alpha b_s(x,\xi)| \leq C_n 2^{\frac{1}{2}|\alpha|} (\alpha !)^{1/2}   \int_0^\infty t^{s-1}  (\cosh t)^{-n} (\tanh t)^{\frac{1}{2}|\alpha|}  e^{-\frac{1}{2}(\tanh t) (|x|^2+|\xi|^2)} dt .$$
In order to estimate the integral appearing above, we write it as
$$  I = \int_0^\infty  \Pi_{j=1}^n t^{(s-1)/n}  (\cosh t)^{-1} (\tanh t)^{\frac{1}{2}\alpha_j}  e^{-\frac{1}{2n}(\tanh t) (|x|^2+|\xi|^2)} dt .$$
Applying generalised Holder's inequality, we are led to estimating $ I \leq \Pi_{j=1}^n I_j^{1/n} $ where 
$$  I_j = \int_0^\infty  t^{s-1}  (\cosh t)^{-n} (\tanh t)^{\frac{n}{2}\alpha_j}  e^{-\frac{1}{2}(\tanh t) (|x|^2+|\xi|^2)} dt .$$
Assuming $ s = 1 $ and making a change of variables, we have to estimate the integral
$$ J = \int_0^1 (1-t^2)^{n/2-1} t^{nk/2} e^{-\frac{n}{2}ta^2} dt.$$
Further assuming that $ n \geq 2$ we get two kinds of estimates for $ J.$ Namely, $  J \leq C a^{-2}$  and $ J \leq C \Gamma(1+(nk)/2) a^{-2-nk}.$ These estimates immediately lead to the estimates $ I \leq C  (|x|^2+|\xi|^2)^{-1} $ and
$$  I \leq  C^{|\alpha|} (\alpha !)^{1/2}  (|x|^2+|\xi|^2)^{-1-(1/2)|\alpha|} $$
where we have used Stirling's formula to estimate the Gamma function.  Thus we have proved 
$$ |\partial_{x,\xi}^\alpha b_s(x,\xi)| \leq C^{|\alpha|} (\alpha !)^{1/2} (|x|^2+|\xi|^2)^{-1} $$
as well as 
$$ |\partial_{x,\xi}^\alpha b_s(x,\xi)| \leq C^{|\alpha|} (\alpha !) (|x|^2+|\xi|^2)^{-1-(1/2)|\alpha|} .$$ Interpolation now gives the required estimate when $ s =1 .$ 
When $ 0 < s <1,$ we are led to estimate the integrals
$$  \int_0^\infty t^{s-1}  (\cosh t)^{-n} (\tanh t)^{\frac{1}{2}|\alpha|}  e^{-\frac{1}{2}(\tanh t) (|x|^2+|\xi|^2)} dt .$$ As $ \tanh t $ behaves like $ t $ for $ t $ small and is dominated by $ t $  for $ t\geq 1$ and since $ s-1 <0 $ we can bound the above integral by
$$  \int_0^\infty (\tanh t)^{s-1}  (\cosh t)^{-n} (\tanh t)^{\frac{1}{2}|\alpha|}  e^{-\frac{1}{2}(\tanh t) (|x|^2+|\xi|^2)} dt .$$ This can be estimated as before yielding the required estimate.

\end{proof}

\section{More on fractional powers  of the Hermite operator}

As noted elsewhere, it is sometimes more convenient to use a variant of the fractional power. At least in the case of the sublaplacian $ \mathcal{L} $ on the Heisenberg group $ \H^n $ , it has turned out to be more natural and fruitful to use the conformally invariant fractional power $ \mathcal{L}_s $ instead of the pure fractional power $ \mathcal{L}^s $, see \cite{RT} for the definition. For the case of the Hermite operator it amounts to replace $ H^s $ by the operator defined by 
$$  H_s \varphi = \sum_{k=0}^\infty  \frac{\Gamma(\frac{2k+n+1+s}{2})}{\Gamma(\frac{2k+n+1-s}{2})} P_k\varphi  $$
where $ P_k $ are the spectral projections associated to $ H.$ In view of Stirling's formula for the Gamma function, it follows that $ H_s $ differs from the pure power $ H^s $ by a bounded operator $ U_s.$ Indeed, if we let 
$$ U_s \varphi = \sum_{k=0}^\infty  \frac{\Gamma(\frac{2k+n+1+s}{2})}{\Gamma(\frac{2k+n+1-s}{2})} (2k+n)^{-s} P_k\varphi  $$
then clearly, $ U_s $ is bounded on $ L^2(\R^n) $ and $ H_s = U_s H^s.$ We also note that $ H_s^{-1} = H_{-s}.$ Using the connection between $ \mathcal{L}_s $ and $ H_s $ we can obtain an explicit formula for the Weyl symbol of $ H_s^{-1}.$

We make use of several known facts: first of all we recall (see \cite{ST}) that $ P_k = (2\pi)^{-n} W(\varphi_k) $ where $ \varphi_k(z) = L_k^{n-1}(\frac{1}{2}|z|^2) e^{-\frac{1}{4}|z|^2} $ are the Laguerre functions of type $ (n-1) $ on $ \C^n.$  Here $ L_k^{\alpha}(r) $ are Laguerre polynomials of type $ \alpha.$ Thus if we let
$$ F_s(z) = (2\pi)^{-n} \sum_{k=0}^\infty  \frac{\Gamma(\frac{2k+n+1-s}{2})}{\Gamma(\frac{2k+n+1+s}{2})} \varphi_k(z)  $$
then it follows that $ H_s^{-1}  = W(F_s).$ The function $ F_s $ is known explicitly. To see this, let 
$ \varphi_k^\alpha(r) = L_k^{\alpha}(r^2) e^{-\frac{1}{2} r^2} $ be Laguerre functions of type $ \alpha.$  Let $ K_\nu(r)$ stands for the Macdonald function of type $ \nu$ defined by the Sommerfeld integral (see \cite{NU} p.226)
$$ K_\nu(r) = \frac{1}{2} (\frac{r}{2})^\nu \int_0^\infty e^{-(t+\frac{r^2}{4t})} t^{-\nu-1} dt.$$
Then the function $ G_{\alpha,\sigma}(r) $ defined by
$$  G_{\alpha,\sigma}(r) =  \frac{ 2^{\alpha+\sigma}\Gamma(\frac{\alpha-\sigma}{2})}{\sqrt{\pi} \Gamma(\sigma)}  r^{-\alpha-1+\sigma} K_{(\alpha+1-\sigma)/2}(\frac{1}{2}r^2) $$
can be expanded in terms of the functions $ \varphi_k^\alpha.$ In \cite{CLT} the authors have shown that
$$ G_{\alpha,\sigma}(r) = \frac{2}{\Gamma(\alpha+1)} \sum_{k=0}^\infty  \frac{\Gamma(\frac{2k+\alpha+1+1-\sigma}{2})}{\Gamma(\frac{2k+\alpha+1+1+\sigma}{2})} \varphi_k^\alpha(r).  $$
Thus  we see that, by choosing $ \alpha = n-1 $ and  $ \sigma = s ,$ the function $ F_s $ is explicitly given by
$$ F_s(z) = c_{n,s}  |z|^{-n+s} K_{(n-s)/2}(\frac{1}{4}|z|^2) $$  where $ c_{n,s} $ is an explicit constant. Finally the Weyl symbol of $ H_s^{-1} $ is given by
$$ b_s(x,\xi)  = (2\pi)^{-n}  \int_{\R^{2n}} F_s(u+iv) e^{-i(x \cdot u+\xi \cdot v)} du dv.$$

\begin{thm} For $ 0 < s \leq 1 $ the Weyl symbol of $ H_s^{-1} $ is given explicitly by 
$$ b_s(x,\xi) = c_{n,s} \int_0^1 e^{-a(|x|^2+|\xi|^2)} a^{s-1} (1-a^2)^{\frac{(n-s-1)}{2}} da.$$
Moreover, the following estimates are valid:
$$   |\partial_{x,\xi}^\alpha b_s(x,\xi) | \leq C^{|\alpha|+1} (\alpha!)^{(r+1)/2} (|x|^2+|\xi|^2)^{-s-(r/2)|\alpha|} $$ for some constant $ C $ which is independent of $ \alpha \in \mathbb N^{2n} $ and  $ r \in [0,1].$
\end{thm}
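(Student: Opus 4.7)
The plan is to handle the two claims in turn: the explicit integral formula drops out of a classical integral representation of the Macdonald function combined with Gaussian Fourier inversion in $\R^{2n}$, and the Gevrey estimates are then obtained by essentially the argument of Theorem 2.1 applied to the new integral.

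For the explicit formula I would start from the identification $H_s^{-1}=W(F_s)$ with $F_s(z)=c_{n,s}|z|^{-n+s}K_{(n-s)/2}(|z|^2/4)$ already derived in the text, and invoke the classical representation
\begin{equation*}
K_\nu(r)=\frac{\sqrt{\pi}\,(r/2)^\nu}{\Gamma(\nu+\tfrac12)}\int_1^\infty e^{-r\tau}(\tau^2-1)^{\nu-1/2}\,d\tau,
\end{equation*}
valid for $\Re\nu>-1/2$ and $r>0$. Plugging in $r=|z|^2/4$ and $\nu=(n-s)/2$, the prefactor $(r/2)^\nu$ produces $|z|^{n-s}$, which exactly cancels the $|z|^{-n+s}$ in $F_s$, so $F_s$ is revealed as a superposition of Gaussians
\begin{equation*}
F_s(z)=C_{n,s}\int_1^\infty e^{-(|z|^2/4)\tau}(\tau^2-1)^{(n-s-1)/2}\,d\tau.
\end{equation*}
Since the Fourier transform of $e^{-(|z|^2/4)\tau}$ on $\R^{2n}$ is $(4\pi/\tau)^n e^{-(|x|^2+|\xi|^2)/\tau}$, Fubini yields
\begin{equation*}
b_s(x,\xi)=C_{n,s}'\int_1^\infty \tau^{-n} e^{-(|x|^2+|\xi|^2)/\tau}(\tau^2-1)^{(n-s-1)/2}\,d\tau,
\end{equation*}
and the substitution $a=1/\tau$ (so $d\tau=-da/a^2$) collects all powers of $\tau$ into $a^{s-1}(1-a^2)^{(n-s-1)/2}$ and produces precisely the claimed expression.

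For the estimates I would differentiate under the integral via the Hermite identity
\begin{equation*}
\partial_{x,\xi}^\alpha e^{-a(|x|^2+|\xi|^2)}=(-1)^{|\alpha|}a^{|\alpha|/2}H_\alpha\bigl(\sqrt{a}(x,\xi)\bigr)e^{-a(|x|^2+|\xi|^2)},
\end{equation*}
so that the uniform boundedness of the normalised Hermite functions (exactly as used in the proof of Theorem 2.1) gives
\begin{equation*}
|\partial_{x,\xi}^\alpha b_s(x,\xi)|\leq C_n\,2^{|\alpha|/2}(\alpha!)^{1/2}\int_0^1 a^{s-1+|\alpha|/2}(1-a^2)^{(n-s-1)/2}e^{-\frac{a}{2}(|x|^2+|\xi|^2)}\,da.
\end{equation*}
Writing $A=(|x|^2+|\xi|^2)/2$ I would estimate this integral in two ways: bounding $a^{|\alpha|/2}\leq 1$ on $[0,1]$ and extending the $a$-integral to $(0,\infty)$ gives $\leq \Gamma(s)A^{-s}$, while bounding $(1-a^2)^{(n-s-1)/2}\leq 1$ and again extending to $(0,\infty)$ gives $\leq \Gamma(s+|\alpha|/2)A^{-s-|\alpha|/2}$. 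Stirling together with the multinomial estimate $|\alpha|!\leq (2n)^{|\alpha|}\alpha!$ converts $\Gamma(s+|\alpha|/2)$ into a factor of order $C^{|\alpha|}(\alpha!)^{1/2}$, producing the two endpoint bounds
\begin{equation*}
|\partial_{x,\xi}^\alpha b_s(x,\xi)|\leq C^{|\alpha|+1}(\alpha!)^{1/2}A^{-s}, \qquad |\partial_{x,\xi}^\alpha b_s(x,\xi)|\leq C^{|\alpha|+1}\alpha!\,A^{-s-|\alpha|/2}.
\end{equation*}
Geometric interpolation with weights $1-r$ and $r$ then delivers the factor $(\alpha!)^{(r+1)/2}$ and the exponent $-s-(r/2)|\alpha|$ of the claim.

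The main obstacle is bookkeeping in the first part, namely checking that the Gamma factors from the Macdonald integral, from Gaussian Fourier inversion on $\R^{2n}$, and from the change of variables $a=1/\tau$ all combine into a single constant $c_{n,s}$; but the structural identification of the integrand as a Gaussian superposition is clean. Once the explicit formula is in hand the second part is essentially a verbatim transcription of the argument in Theorem 2.1, with the measure $t^{s-1}(\cosh t)^{-n}\,dt$ replaced by $a^{s-1}(1-a^2)^{(n-s-1)/2}\,da$, and presents no new difficulty.
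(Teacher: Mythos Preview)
Your proposal is correct and follows essentially the same route as the paper: the integral for $K_\nu$ you invoke is the same Poisson representation the paper quotes (the substitution $\tau=1+t/r$ turns one into the other), and Gaussian Fourier inversion followed by $a=1/\tau$ yields the claimed formula just as the paper sketches. For the derivative estimates the paper simply refers back to the argument of Theorem~2.1, which is exactly the Hermite-function bound plus two-endpoint interpolation you carry out; your direct use of $|\alpha|!\le(2n)^{|\alpha|}\alpha!$ in place of the H\"older splitting used there is a harmless streamlining of the same scheme.
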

\begin{proof} In order to get the integral representation for $ b_s(x,\xi) $ we make use of the Poisson integral representation of $ K_\nu$: ( see \cite{NU}, p.223)
$$ K_\nu(r) = \frac{\sqrt{\pi}}{\sqrt{2}\Gamma(\nu+1/2)} r^{-1/2} e^{-r} \int_0^\infty e^{-t} t^{\nu-1/2}(1+t/(2r))^{\nu-1/2} dt.$$
Recalling the formula for $ F_s(z) $ in terms of $ K_{(n-s)/2}(\frac{1}{4}|z|^2) $ and using the fact that the Fourier transform of $ e^{-t|z|^2} $ is a constant multiple of $ t^{-n} e^{-\frac{1}{4t}|z|^2} $ we see that, after a change of variables,
$$ b_s(x,\xi) = c_{n,s} \int_0^1  e^{-a(|x|^2+|\xi|^2)} a^{s-1} (1-a^2)^{\frac{(n-s-1)}{2}} da.$$
We observe that the above expression coincides with the formula we got for $ b_1 $ earlier. Estimating  derivatives of $ b_s $ is done as in the case of $ s =1.$ We leave the details to the reader.

\end{proof}

\begin{rem} The integral representation for $b_s $ can also be  obtained easily by making use of the numerical identity
 (see \cite[p. 382, 3.541.1]{GR})
$$
\int_0^{\infty}e^{-\mu t}\sinh^\nu\beta t\,dt=\frac{1}{2^{\nu+1}}
\frac{\Gamma\big(\frac{\mu}{2\beta}-\frac{\nu}{2}\big)\Gamma(\nu+1)}
{\Gamma\big(\frac{\mu}{2\beta}+\frac{\nu}{2}+1\big)},
$$
which is valid for $\operatorname{Re}\beta>0$, $\operatorname{Re}\nu>-1$, $\operatorname{Re}\mu>\operatorname{Re}\beta\nu$. The proof given above has the added advantage that the Fourier transform of $ b_s $ is given explicitly. Indeed, we have
$$ \int_{\R^{2n}} b_s(x,\xi) e^{-i(y \cdot x+ \xi \cdot \eta)} dx d\xi = C_{n,s} (|y|^2+|\eta|^2)^{-(n-s)/2} K_{(n-s)/2}(\frac{1}{4}(|y|^2+|\eta|^2)).$$
Since $ K_\nu $ is a linear combination of  the modified Bessel functions $ I_\nu $ and $ I_{-\nu}, $ (see \cite{NU}, p.224), the above is an explicit formula for the Fourier transform of $ b_s.$

\end{rem}



\begin{thebibliography}{10}

 
\bibitem{CRT}  M. Cappiello, L. Rodino and J. Toft, \textit{On the inverse to the harmonic oscillator}, Comm. Partial Differential Equations 40 (2015), 1096-1118.

\bibitem{CLT} \'O. Ciaurri, L. Roncal and S. Thangavelu, \textit{Hardy-type inequalities for fractional powers of the Dunkl-Hermite operator},  Proc. Edinburg Math. Soc. (to appear)

\bibitem{GR} I. S. Gradshteyn and I. M. Ryzhik,
\emph{Table of Integrals, Series and Products}. Seventh edition. Elsevier/Academic Press, Amsterdam, 2007.

\bibitem{NU} A. F. Nikiforov and V. B. Uvarov, Special Functions of Mathematical Physics, Birkhauser, Basel (1988).

\bibitem{RT} L. Roncal and  S. Thangavelu, \textit{Hardy's inequality for fractional powers of the sublaplacian on the Heisenberg group}, Advances in Math. 302 (2016), 106-158.  

\bibitem{ST} 
S. Thangavelu, \textit{Lectures on Hermite and Laguerre expansions}, Math. Notes. \textbf{42}, Princeton University Press, Princeton, NJ, 1993.


\end{thebibliography}
\end{document}